\newcommand{\compactlist}{\begin{list}{$\bullet$}{\setlength{\leftmargin}{1em}}}
\newcommand{\fig}[2] { \includegraphics[scale=#1]{#2} }
\def\cs{\mathop{\#}}
\def\cfk{{\textrm{CFK}}}
\newcommand{\spinc}{\ifmmode{{\mathfrak s}}\else{${\mathfrak s}$\ }\fi}
\newcommand{\spinct}{\ifmmode{{\mathfrak t}}\else{${\mathfrak t}$\ }\fi}
\newcommand{\spincw}{\ifmmode{{\mathfrak w}}\else{${\mathfrak w}$\ }\fi}
\newtheorem{theorem}{Theorem}[section]
\newtheorem*{theorem*}{Theorem}
\newtheorem{lemma}[theorem]{Lemma}
\theoremstyle{definition}
\theoremstyle{remark}
\numberwithin{equation}{section}
\begin{document}


\title[Concordance to L-Space knots]{Concordances from connected sums of torus knots to L-space knots}
\author{Charles Livingston}
 \address{Charles Livingston: Department of Mathematics, Indiana University, Bloomington, IN 47405 }
\email{livingst@indiana.edu}
\thanks{The author was supported by  NSF-DMS-1505586.}
\maketitle
 \begin{abstract}  If a knot  is a nontrivial connected sum of positive torus knots, then it is not  concordant to an $L$--space knot.
 \end{abstract}
 
 \section{Introduction}

In a recent paper about involutive knot Floer homology,  Zemke~\cite{zemke} observed that invariants arising from   Heegaard Floer knot homology can obstruct a knot from being concordant to an $L$--space knot.   He considered examples of connected sums of torus knots, such as $T(4,5)\cs T(4,5)$ and $-T(3,4) \cs -T(4,5) \cs  T(5,6)$, and noted that in some cases alternative obstructions are available.  Here we use classical knot invariants along with the Ozsv\'ath-Szab\'o tau invariant~\cite{os2}, $\tau(K)$, to prove the following theorem.\vskip.1in
 
\noindent {\bf Theorem.} {\em Let  $\{  (p_i, q_i)  \}_{i=1, \ldots ,n}$ be a  set of pairs of relatively prime positive integers with $2\le p_i < q_i$ for all $i$ and  with $n>1$.  Then $\cs_iT(p_i,q_i)$ is not concordant to an $L$--space knot.   } 
\vskip.1in

The main idea of the proof can be illustrated with the example  $K = T(4,5)\cs T(4,5)$.  Since $\tau(K) = 12$,  if $K$ is concordant to a knot $J$, then $\tau(J) = 12$. If $J$ is  an $L$--space knot, then   the Alexander polynomial of $J$ is of degree 24.  The Alexander polynomial $\Delta_K(t) $ is the product of  cyclotomic polynomials $\phi_{10}(t)^2\phi_{20}(t)^2$.  Since the Levine-Tristram signature function~\cite{levine, tristram}  for $K$ jumps by  4 at the roots of $\phi_{20}(t)$ and $\phi_{10}(t)$, the same is true for $J$.  This implies that $ \phi_{10}(t)^2\phi _{20}(t)^2$ divides $\Delta_J(t)$.   By degree considerations, this implies $\Delta_J(t) =  \phi^2_{10}(t)\phi^2_{20} = t^{24} - 2t^{23} + \cdots +1  $.  However all coefficients of the Alexander polynomial of an $L$--space knot are $\pm 1$. 

  Krcatovich~\cite{krcatovich} proved that all $L$--space knots are  prime, so such connected sums are definitely not $L$--space knots.  A proof that such connected sums cannot be {\it concordant} to $L$--space knots appears to be inaccessible using Heegaard Floer theory alone.  The proof of the main theorem depends on a detailed analysis of the signature functions and Alexander polynomials of connected sums of torus knots.  This dependance on classical invariants seems  to be necessary, as the following example shows.  We will see that the connected sum $ T(2,3) \cs T(2,3)$ is not concordant to an $L$--space knot.  However, the torus knot $T(2,5)$ is an $L$--space knot.  The Heegaard Floer complex $\cfk^\infty( T(2,5))$  is formed from  $\cfk^\infty( T(2,3) \cs T(2,3))$ by adding an acyclic summand, and  thus known concordance invariants that arise from $\cfk^\infty(K)$   cannot alone prove that $K$ is not concordant to an $L$--space knot.\vskip.05in
 
\noindent{\it References.} Basic facts about the Alexander polynomials of torus knots are covered in textbooks on knot theory, such as \cite{burde, rolfsen}.  For facts about $L$--space knots and the necessary Heegaard Floer theory, see~\cite{os2, os3, os}.  Needed results concerning the signature function are contained in~\cite{levine, tristram}, and its behavior under cabling is described in~\cite{litherland}.  One fact about the  signature function of a knot $K$ that is  used is that the jump in the function at a number $\alpha$ is bounded above by the order of $\alpha$ as a root of the Alexander polynomial $\Delta_K(t)$; also, the jump equals that order $\alpha$ modulo two.  This follows most easily from Milnor's description~\cite{milnor} of what are now called {\it Milnor signatures}, which Matumoto~\cite{matu} proved equal the jumps in the signature function;  for   recent presentations, see~\cite{gl, kearney}.
\vskip.05in
\noindent{\it Acknowledgments.} Thanks are due to John Baldwin, Matt Hedden, Jen Hom,  David Krcatovich, Adam Levine, and Ian Zemke.
 
\section{Torus knot Alexander polynomials and signature functions}  

\subsection{Alexander polynomials of torus knots} The Alexander polynomial of the torus knot $T(p,q)$ is given by 
\begin{equation}\label{eqn:poly}  \Delta_{p,q}(t) = \frac{ (t^{pq} -1)(t-1)}{(t^p-1)(t^q-1)}.
 \end{equation}
 Roots of this are $pq$--roots of unity that are not $p$-roots of unity or $q$-roots of unity.  Letting $\phi_n(t)$ denote the $n$th cyclotomic polynomial, we have:
 
\begin{lemma}\label{thm:alex}
With notation as above, $$\Delta_{p,q}(t) =\prod \phi_{\alpha_j \beta_j}(t),$$ where the product is over the set of all pairs $(\alpha_j,\beta_j)$ for which $\alpha_j$ is a factor of $p$, $\beta_j$ is a factor of $q$, and both are greater than 1.
 \end{lemma}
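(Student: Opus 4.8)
The plan is to substitute the classical cyclotomic factorization $t^{n}-1=\prod_{d\mid n}\phi_d(t)$ into~\eqref{eqn:poly} and then cancel. Carrying this out, and using $t-1=\phi_1(t)$, gives
\[
\Delta_{p,q}(t)=\frac{\left(\prod_{d\mid pq}\phi_d(t)\right)\phi_1(t)}{\left(\prod_{d\mid p}\phi_d(t)\right)\left(\prod_{d\mid q}\phi_d(t)\right)}.
\]
The linear factor $\phi_1(t)$ appears with total multiplicity two in the numerator (once as the $d=1$ term of $\prod_{d\mid pq}\phi_d(t)$ and once as the explicit factor $t-1$) and with total multiplicity two in the denominator (once each from $\prod_{d\mid p}\phi_d(t)$ and $\prod_{d\mid q}\phi_d(t)$), so all four copies of $\phi_1$ cancel.

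Next I would invoke the coprimality of $p$ and $q$. Since $\gcd(p,q)=1$, the map $(\alpha,\beta)\mapsto\alpha\beta$ is a bijection from $\{\,\alpha:\alpha\mid p\,\}\times\{\,\beta:\beta\mid q\,\}$ onto $\{\,d:d\mid pq\,\}$; under it the divisors of $p$ are the products with $\beta=1$, and the divisors of $q$ are the products with $\alpha=1$. Thus, after removing the $\phi_1$'s, the remaining denominator $\prod_{d\mid p,\,d>1}\phi_d(t)\cdot\prod_{d\mid q,\,d>1}\phi_d(t)$ cancels precisely those numerator factors $\phi_{\alpha\beta}(t)$ for which exactly one of $\alpha,\beta$ equals $1$, and what survives is $\prod\phi_{\alpha\beta}(t)$ over pairs with $\alpha>1$ and $\beta>1$. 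That is the claimed identity.

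There is no genuine obstacle here; the only point that deserves a word is that the cancellation is exact, with no cyclotomic factor surviving at the wrong multiplicity. This is again a consequence of coprimality: distinct pairs $(\alpha,\beta)$ give distinct integers $\alpha\beta$, so each $\phi_{\alpha\beta}(t)$ occurs with multiplicity one wherever it occurs in the displayed fraction. As a consistency check I would compare degrees: using $\varphi(\alpha\beta)=\varphi(\alpha)\varphi(\beta)$ for coprime $\alpha,\beta$ together with $\sum_{d\mid n}\varphi(d)=n$, the degree of the proposed product is $\bigl(\sum_{\alpha\mid p,\,\alpha>1}\varphi(\alpha)\bigr)\bigl(\sum_{\beta\mid q,\,\beta>1}\varphi(\beta)\bigr)=(p-1)(q-1)$, which matches the degree of $\Delta_{p,q}(t)$.
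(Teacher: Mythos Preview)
Your argument is correct and complete. The paper itself does not give a separate proof of this lemma: the only justification it offers is the sentence preceding the statement, observing that the roots of $\Delta_{p,q}(t)$ are exactly the $pq$-th roots of unity that are neither $p$-th nor $q$-th roots of unity. Your substitution of $t^{n}-1=\prod_{d\mid n}\phi_d(t)$ and use of the divisor bijection coming from $\gcd(p,q)=1$ is precisely the standard way to make that one-line observation rigorous, so your approach is the same in spirit, just spelled out in full.
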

 
\begin{lemma}\label{thm:alexL} Consider a set of  $n$ torus knots, $\{T(p_i, q_i)\}$, and let $d_i = (p_i-1)(q_i -1)$ be the degree of the $\Delta_{T(p_i,q_i)}(t)$.  For  $K = \cs_i T(p_i, q_i)$, the two highest degree terms of the Alexander polynomial are $t^{\sum d_i} - nt^{(\sum d_i) -1}$.
 \end{lemma}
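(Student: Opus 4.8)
The plan is to use the multiplicativity of the Alexander polynomial under connected sum: $\Delta_K(t) = \prod_{i=1}^n \Delta_{T(p_i,q_i)}(t)$. Each factor $\Delta_{p_i,q_i}(t)$ is monic of degree $d_i$, so the product is monic of degree $\sum d_i$, and the coefficient of $t^{(\sum d_i)-1}$ in the product is the sum of the coefficients of $t^{d_i-1}$ in the individual factors. Hence it suffices to show that, for each torus knot $T(p,q)$ with $2\le p<q$, the polynomial $\Delta_{p,q}(t)$ has the form $t^{d}-t^{d-1}+(\text{lower order terms})$ with $d=(p-1)(q-1)$; the two highest-degree terms of $\Delta_K(t)$ will then be $t^{\sum d_i}-n\,t^{(\sum d_i)-1}$.

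To pin down the top two coefficients of $\Delta_{p,q}(t)$, I would clear denominators in \eqref{eqn:poly}. Writing $\Delta_{p,q}(t)=t^{d}+c_{d-1}t^{d-1}+\cdots$ and $(t^p-1)(t^q-1)=t^{p+q}-t^p-t^q+1$, compare coefficients in the identity $\Delta_{p,q}(t)\,(t^p-1)(t^q-1)=(t^{pq}-1)(t-1)=t^{pq+1}-t^{pq}-t+1$ near the top degree. Since $2\le p<q$ forces $p+q-1$ to differ from each of $p+q$, $p$, $q$, and $0$, the only contributions to degrees $pq+1$ and $pq$ come from $t^{d}\cdot t^{p+q}$ and $c_{d-1}t^{d-1}\cdot t^{p+q}$ respectively; matching with the right-hand side gives leading coefficient $1$ (consistent with $\deg\Delta_{p,q}=pq+1-(p+q)=(p-1)(q-1)$) and $c_{d-1}=-1$.

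Finally I would return to the product $\prod_i\bigl(t^{d_i}-t^{d_i-1}+\cdots\bigr)$. Its unique top term is $t^{\sum d_i}$, obtained by taking the leading term from every factor. A term of degree $(\sum d_i)-1$ can only arise by taking $-t^{d_i-1}$ from exactly one factor and the leading term from all the others: taking any term of degree $\le d_i-2$ from some factor, or the subleading term from two or more factors, lowers the total degree by at least $2$. There are $n$ such choices, each contributing $-1$, so the coefficient of $t^{(\sum d_i)-1}$ is $-n$, as claimed.

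There is no serious obstacle here; the only point requiring care is the coefficient comparison for a single torus knot, namely checking that the exponents $p+q$, $p$, $q$, $0$ occurring in $(t^p-1)(t^q-1)$ are separated enough from the top degree that no unexpected cancellation occurs, and this is precisely where $p,q\ge 2$ enters. As an alternative to the long division, one could instead read $c_{d-1}=-1$ off Lemma~\ref{thm:alex}: the coefficient of $t^{\varphi(m)-1}$ in $\phi_m(t)$ equals $-\mu(m)$, and since $\gcd(p,q)=1$ the resulting Möbius sum factors as $-\bigl(\sum_{\alpha\mid p,\ \alpha>1}\mu(\alpha)\bigr)\bigl(\sum_{\beta\mid q,\ \beta>1}\mu(\beta)\bigr)=-(-1)(-1)=-1$.
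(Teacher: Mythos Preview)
Your argument is correct. Both you and the paper rest on the same underlying observation---that in the quotient formula~\eqref{eqn:poly} the second-highest term of the denominator sits at least two degrees below the top because all $p_i,q_i\ge 2$---but the two proofs are organized differently. The paper combines all $n$ factors into a single fraction
\[
\frac{\prod_i (t^{p_iq_i}-1)(t-1)}{\prod_i (t^{p_i}-1)(t^{q_i}-1)}
\]
and reads off the top two terms of numerator and denominator at once (the $-n$ coming from the $n$ ways to replace one factor $t$ by $-1$ among the $(t-1)$'s); you instead first show that each individual $\Delta_{p_i,q_i}$ begins $t^{d_i}-t^{d_i-1}+\cdots$ and then take the product. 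Your modular version is a bit cleaner and the single-knot statement is reusable; the paper's version is marginally shorter. The alternative derivation of $c_{d-1}=-1$ via Lemma~\ref{thm:alex} and the M\"obius identity is a pleasant extra that does not appear in the paper.
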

 \begin{proof} Consider   the numerator of the product of the Alexander polynomials  when is   written in the  quotient  form described by Equation~\ref{eqn:poly}.  The leading term arises as the product of terms of  degree $\sum (p_iq_i  +1)$.  (This results from the product of the $t^{p_iq_i}$ terms times the product of the $t$ terms in the $n$ factors $(t-1)$.)  The next term, of degree one less, is obtained from a similar product, except one of the $t$ terms from a $(t-1)$ factor is not included, being replaced with $-1$ in the product.  There are $n$ such possible terms to drop.
 
The leading term of the denominator is   degree $s = \sum p_i + \sum q_i$.  The next highest degree term is of degree at most $s -2$ (which would occur if one of the $p_i$ or $q_i$ were equal to $2$). The result stated in the lemma is now easily seen, for instance by considering the long division algorithm.

\end{proof}
 
\subsection{Signature functions of torus knots}
 
For a knot $K \subset S^3$, let $\sigma_K(t)$ denote the signature of the hermitian form $(1-\omega)V_K + (1-\overline{\omega})V_K^{\sf T}$, 
where $V_K$ is a Seifert matrix and  $\omega=  e^{{2\pi i}t }$.  The associated jump function is given by 
$$J_K(t) =\frac{1}{2}\left( \lim_{s \to t^+}\sigma_K(s) -  \lim_{s \to t^-}\sigma_K(s)\right).$$
 This function is a concordance invariant of $K$. 
Figure~\ref{figsign} illustrates the graph of the signature function for $T(3,7)$ on $[0,\frac{1}{2}]$ with 
jumps at $\{\frac{1}{21}, \frac{2}{21}, \frac{4}{21}, \frac{5}{21}, \frac{8}{21}, \frac{10}{21}\} $, each of value $\pm 1$.  Notice that the jump at $\frac{1}{21}$ is negative. (The factor of $1/2$ in the definition of $J_K(t)$ is included to simplify notation. As defined, $J_K(t) = \sigma_K(t+ \epsilon) - \sigma_K(t)$ for all sufficiently small $\epsilon >0$.)
 \begin{figure}[h]
 \fig{.4}{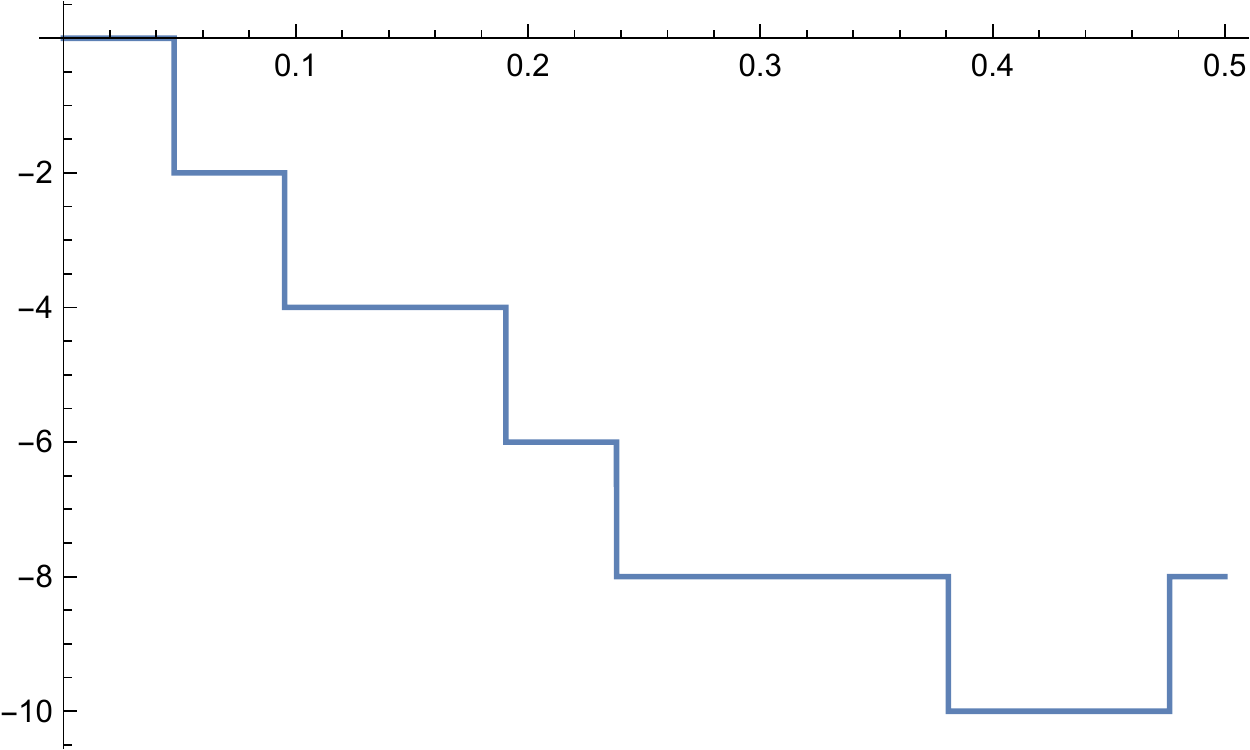}
 \caption{}\label{figsign}
 \end{figure} 

 \begin{lemma}\label{lem:main} For any positive torus knot $T(p,q)$ and for any positive integer $r$, $J_K(1/r) \le 0$.
 \end{lemma}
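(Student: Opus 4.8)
The plan is to reduce the statement to an elementary divisibility fact by combining the explicit description of the jumps of a torus‑knot signature function with a short number‑theoretic argument. First I would dispose of the degenerate cases: if $p=1$ or $q=1$ then $T(p,q)$ is unknotted and $\sigma_K\equiv 0$, and if $e^{2\pi i/r}$ is not a root of $\Delta_{p,q}(t)$ then $J_K(1/r)=0$ by definition. So assume $2\le p<q$, $\gcd(p,q)=1$, and that $e^{2\pi i/r}$ is a root. From the formula \eqref{eqn:poly} (or Lemma~\ref{thm:alex}), $e^{2\pi i/r}$ is a root precisely when $r\mid pq$ while $r\nmid p$ and $r\nmid q$. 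Setting $d=\gcd(r,p)$ and $e=\gcd(r,q)$, coprimality of $p$ and $q$ gives $r=de$ with $d\mid p$, $e\mid q$, and the conditions $r\nmid p$, $r\nmid q$ become $e\ge 2$ and $d\ge 2$; writing $P=p/d$ and $Q=q/e$ we then have $1\le P<p$, $1\le Q<q$, and $e^{2\pi i/r}=e^{2\pi i k/(pq)}$ with $k=pq/r=PQ$.

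Next I would invoke the standard computation of the jumps of $\sigma_{T(p,q)}$ — equivalently, Milnor's signatures of the singularity $x^p+y^q$ together with Matumoto's theorem identifying them with the jumps of the Levine–Tristram signature (\cite{milnor,matu}; see also \cite{gl,kearney}). Concretely: the roots of $\Delta_{p,q}(t)$ are exactly the $(p-1)(q-1)$ numbers $e^{2\pi i\theta}$ with $\theta=\{a/p+b/q\}$ for $(a,b)\in\{1,\dots,p-1\}\times\{1,\dots,q-1\}$ (these $\theta$ are pairwise distinct because $\gcd(p,q)=1$), and the jump of $\sigma_{T(p,q)}$ at such a point is $J_{T(p,q)}(\theta)=(-1)^{\lfloor a/p+b/q\rfloor}$, i.e.\ it is $+1$ if $a/p+b/q<1$ and $-1$ if $a/p+b/q>1$ (the floor is $0$ or $1$ since $0<a/p+b/q<2$). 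Now apply this at $\theta=1/r$: for the unique pair $(a,b)$ with $\{a/p+b/q\}=1/r$, clearing denominators and using $1\le a\le p-1$, $1\le b\le q-1$ (whence $0<aq+bp<2pq$ and $aq+bp\equiv k\pmod{pq}$) forces $aq+bp\in\{k,\,k+pq\}$, that is, either $a/p+b/q=1/r$ (jump $+1$) or $a/p+b/q=1+1/r$ (jump $-1$). So it remains to rule out the possibility $a/p+b/q=1/r$.

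Suppose then $a/p+b/q=1/r$ with $1\le a\le p-1$ and $1\le b\le q-1$. Clearing denominators, $aq+bp=PQ$, and since $q=eQ$ this reads $bp=Q(P-ae)$, so $Q\mid bp$; because $Q\mid q$ and $\gcd(p,q)=1$ force $\gcd(Q,p)=1$, we conclude $Q\mid b$ and hence $b\ge Q$. Then $P-ae=bp/Q\ge p$, so $P\ge p+ae\ge p+1>p$, contradicting $P=p/d\le p/2<p$. Therefore $a/p+b/q=1+1/r>1$, so $J_K(1/r)=-1\le 0$, completing the proof.

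The one step that needs genuine care is the second one: getting the jump formula, and especially its sign convention, exactly right (a positive torus knot must end up with $\sigma_K<0$, so the ``typical'' jump is $-1$). I would pin this down by checking it against Figure~\ref{figsign}: for $T(3,7)$ the root at $1/21$ comes from the unique pair $(a,b)$ with $\{a/3+b/7\}=1/21$, namely $(1,5)$, for which $1/3+5/7=22/21>1$, giving a negative jump as the figure indicates; and $r=21$ is the only integer for which $1/r$ is a root of $\Delta_{3,7}(t)$, so this example is precisely the content of Lemma~\ref{lem:main} in that case.
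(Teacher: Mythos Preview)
Your proof is correct and follows essentially the same approach as the paper's: both invoke the Litherland-type description of the jump as a signed count over pairs $(a,b)$ with $a/p+b/q\in\{x,1+x\}$, and both reduce Lemma~\ref{lem:main} to showing that $a/p+b/q=1/r$ has no solution with $1\le a\le p-1$, $1\le b\le q-1$, via a short divisibility argument using the factorization $r=de$ with $d\mid p$, $e\mid q$, $d,e\ge 2$. The only cosmetic differences are that you phrase the jump formula via Milnor--Matumoto and exploit the distinctness of the roots to get a $\pm1$ jump, whereas the paper quotes Litherland's count directly; and your contradiction ($P-ae\ge p$ versus $P<p$) is a minor repackaging of the paper's ($i'\beta_1+j'\alpha_1=1$ with both summands positive).
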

 
\begin{proof} We use a formula of Litherland~\cite{litherland} to study the signature function.  In this formula, we denote the  integer lattice by  $\Lambda$.  For $0 \le x \le 1$, let 
$$S_-(x) = \{ (i,j) \in \Lambda \ \big| \ 0 < i < p,\ 0 < j <q, \text{\ and\ } \frac{i}{p} +\frac{j}{q} =  x\}$$
 and
 $$S_+(x) = \{ (i,j) \in \Lambda \ \big| \ 0 < i < p,\ 0 < j <q, \text{\ and\ } \frac{i}{p} +\frac{j}{q} =  1+x\}.$$
 According to~\cite{litherland}, $J_{T(p,q)}(x)$ is given by the difference of counts: $J_{T(p,q)}(x) = \# S_-(x) - \# S_+(x)$. 

The lemma is a consequence of the observation that for any positive integer $r$,  $\# S_-(1/r)  = 0$.  To see this, we consider the equation  $$\frac{i}{p} +\frac{j}{q} =  \frac{1}{r}.$$ Multiplying by $pq$ gives
 $$iq + jp = \frac{pq}{r}.$$
 
Jumps in the signature function can occur only at roots of the Alexander polynomial.  Applying Theorem~\ref{thm:alex}, we need to consider the case of  $r = \alpha_1 \beta_1$, where $p = \alpha_1 \alpha _2$, $\alpha_1 >1$, $q = \beta_1 \beta_2$, and   $\beta_1 >1$.  Thus, our equation becomes
 $$i q + jp = \alpha_2 \beta_2.$$
 Since $\alpha_2 $ divides $p$ and $\alpha_2 \beta_2$ and is relatively prime to $q$, it must also divide $i$.  We write $i = i' \alpha_2$ and then divide by $\alpha_2$ to find
 $$ i' q + j \alpha_1 = \beta_2.$$
 Similarly, $\beta_2$ divides $q$ (and itself), 
so it divides $j \alpha_1$.  However, $\beta_2$ 
and $\alpha_1$ are relatively prime, so $j = j' \beta_2$.  Dividing yields
 $$i'\beta_1 + j' \alpha_1 = 1.$$
 
Clearly, since both summands are at least one, the sum is at least two.  Thus, there is no solution to the equation, and $S_-(1/r) = 0$, as desired.
 \end{proof}

\section{Concordances to $L$--space knots}
 We now prove our main theorem.

\begin{theorem}\label{thm:main} Let  $\{  (p_i, q_i)  \}_{i=1, \ldots ,n}$ be a  set of pairs of relatively prime positive integers with $2\le p_i < q_i$ for all $i$ and  with $n>1$.  Then $\cs_iT(p_i,q_i)$ is not concordant to an $L$--space knot.   
\end{theorem}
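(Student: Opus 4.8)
The plan is to show that a concordance from $K=\cs_i T(p_i,q_i)$ to an $L$--space knot $J$ would force $\Delta_J=\Delta_K$, which is impossible: by Lemma~\ref{thm:alexL} the coefficient of the second-highest power of $t$ in $\Delta_K$ equals $-n$ with $n\ge 2$, whereas all coefficients of the Alexander polynomial of an $L$--space knot lie in $\{-1,0,1\}$. Write $d_i=(p_i-1)(q_i-1)$ and $g=\tfrac12\sum_i d_i$. Since $\tau(T(p_i,q_i))=d_i/2$, and $\tau$ is additive and a concordance invariant, a concordance $K\sim J$ gives $\tau(J)=\tau(K)=g$; as $J$ is an $L$--space knot, $\deg\Delta_J=2\tau(J)=2g=\sum_i d_i=\deg\Delta_K$. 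Because $\Delta_K$ and $\Delta_J$ are monic, it then suffices to prove the divisibility $\Delta_K\mid\Delta_J$.

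To establish this I would first factor $\Delta_K$ into cyclotomic polynomials. By Lemma~\ref{thm:alex}, $\Delta_{p_i,q_i}=\prod\phi_{\alpha\beta}$ over divisors $\alpha\mid p_i$, $\beta\mid q_i$ with $\alpha,\beta>1$; since $\gcd(p_i,q_i)=1$, a factorization $m=\alpha\beta$ of this type, when it exists, is unique (necessarily $\alpha=\gcd(m,p_i)$, $\beta=\gcd(m,q_i)$), so each $\Delta_{p_i,q_i}$ is squarefree and
\[
\Delta_K(t)=\prod_m \phi_m(t)^{\mu_m},\qquad \mu_m=\#\{\,i : \phi_m\mid\Delta_{p_i,q_i}\,\}.
\]
Since distinct cyclotomic polynomials are coprime, it is enough to show $\phi_m^{\mu_m}\mid\Delta_J$ for every $m$ with $\mu_m>0$. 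Fix such an $m$, put $\zeta_m=e^{2\pi i/m}$, and examine the jump function at $x=1/m$. If $\phi_m\nmid\Delta_{p_i,q_i}$ then $\zeta_m$ is not a root of $\Delta_{p_i,q_i}$, so $J_{T(p_i,q_i)}(1/m)=0$. If $\phi_m\mid\Delta_{p_i,q_i}$ then $\zeta_m$ is a simple root of $\Delta_{p_i,q_i}$, so $J_{T(p_i,q_i)}(1/m)$ is odd with absolute value at most $1$, hence equals $\pm 1$; but Lemma~\ref{lem:main} forces $J_{T(p_i,q_i)}(1/m)\le 0$, so it equals $-1$. Since the jump function is additive under connected sum, $J_K(1/m)=-\mu_m$.

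Because the jump function is a concordance invariant, $J_J(1/m)=J_K(1/m)=-\mu_m$; as the absolute value of a jump at a point is at most the order of that point as a root of the Alexander polynomial, $\zeta_m$ is a root of $\Delta_J$ of order at least $\mu_m$, that is, $\phi_m^{\mu_m}\mid\Delta_J$. Hence $\Delta_K\mid\Delta_J$, so $\Delta_J=\Delta_K$, and by Lemma~\ref{thm:alexL} the coefficient of $t^{(\sum_i d_i)-1}$ in $\Delta_J$ equals $-n$ with $n>1$, contradicting the fact that all coefficients of an $L$--space knot's Alexander polynomial lie in $\{-1,0,1\}$. I expect the crux to be the identity $J_K(1/m)=-\mu_m$: a priori the $\mu_m$ nonzero summands $J_{T(p_i,q_i)}(1/m)$ could have mixed signs and cancel, so that the jump at $\zeta_m$ would be too small to witness the full multiplicity of $\phi_m$ in $\Delta_K$ and the argument would collapse. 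Lemma~\ref{lem:main}, obtained from Litherland's lattice-point formula for the signature jumps, is precisely the input that prevents this by pinning each nonzero summand to $-1$; it is also the reason the test points $x=1/m$, rather than arbitrary primitive $m$-th roots of unity, are the correct ones, and the reason the proof must step outside Heegaard Floer theory.
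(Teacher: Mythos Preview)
Your proof is correct and follows essentially the same route as the paper: factor $\Delta_K$ into cyclotomic powers, use Lemma~\ref{lem:main} to pin down $J_K(1/m)=-\mu_m$, deduce via concordance invariance of the jump function that $\phi_m^{\mu_m}\mid\Delta_J$, and then use $\tau$ to match degrees and force $\Delta_J=\Delta_K$, contradicting Lemma~\ref{thm:alexL} and the $\pm1$ coefficient constraint for $L$--space knots. The only difference is expository order---you fix $\deg\Delta_J=\deg\Delta_K$ first and then prove divisibility, while the paper proves divisibility first and then compares degrees---but the content is the same.
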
   

\begin{proof}
Let $K = \cs_i T(p_i, q_i)$.  Then $\Delta_K(t) = \prod_j \phi_{r_j}(t)^{m_j}$ for some set of distinct $r_j$ and exponents $m_j>0$.  Notice that $m_j$ is precisely the number of pairs $(p_i, q_i)$ for which $r_j $ can be written as a product of factor of $p_i$ and a factor of $q_i$, both of which are greater than 1.   

For those $T(p_i,q_i)$ for which $\phi_{r_j}(t)$ is a factor of $\Delta_{p_i, q_i}(t)$, that factor has exponent one, and hence the jump at $1/r_j$ is either $\pm 1$.  By Lemma~\ref{lem:main}, the jump is $-1$.  It now follows that the jump in the signature function of $K$ at $t = \frac{1}{r_j}$ equals $-m_j$. 

Suppose that $J$ is concordant to $K$.  Then the jump in the signature function of $J$ at  $t = \frac{1}{r_j}$ also equals $-m_j$.   Thus, $\phi_{r_j}(t)^{m_j}$ is a factor of $\Delta_J(t)$.  It follows that  degree$(\Delta_J(t)) \ge $ degree$(\Delta_K(t))$.   

For a connected sum of torus knots, $2\tau(K) =  $ degree$( \Delta_K(t))$.  Also, for any $L$--space knot,  $2\tau(J) = $degree$( \Delta_J(t))$.  Thus, we have the inequalities 
$$ 2\tau(K) = \text{degree} ( \Delta_K(t))  \le \text{degree} ( \Delta_J(t))  = 2\tau(J).$$
But $\tau(J) = \tau(K)$, so we conclude 
$$  \Delta_K(t)  =  \Delta_J(t).$$
By Lemma~\ref{thm:alexL}, this polynomial does not have all nonzero coefficients equal to $\pm 1$, and thus it cannot be the Alexander polynomial of an $L$--space knot.
 
\end{proof}

 
\section{Generalizations}   

There are cases in which the main theorem extends to connected sums of torus knots, not all of which are positive.  The simplest example is  $K=T(2,5) \cs -T(2,3)$.  It has $\tau(K) = 1$  and Alexander polynomial $\phi_{6}(t)\phi_{10}(t)$.  Since the signature function jumps at all the 6 and 10 roots of unity, any $J$ concordant to $K$ would have its Alexander polynomial divisible by $\Delta_K(t)$, and thus would have degree greater than 2.  In particular since this degree exceeds twice the tau invaraint, $J$ could not be an $L$--space knot. 

A second example  is   $K = -T(3,4) \cs -T(4,5) \cs T(5,6)$.  For this knot, $\tau(K) = 2$, and so if it were concordant to an $L$--space knot $J$, the degree of $\Delta_J(t)$ would be four.  This is impossible, since a calculation shows that the signature function of $K$ has 32 singular points on the unit circle.

 It is a simple matter to build a large collection of examples.

The first example for which results above do not apply is $K= T(2,  9) \cs -T(2,3)$.  This knot has $\tau(K) = 3$ and Alexander polynomial $$\Delta_K(t) = \phi_{18}(t)\phi_{6}(t)^2 = (1 - t^3 + t^6)(1-t+t^2)^2.$$  The signature function jumps by 1 at each of the roots of $\phi_{18}$, and only at those roots.  Thus, the results proved above cannot rule out the possibility that $K$ is concordant to an $L$--space knot   $J$ with $\Delta_J(t) = \phi_{18}(t)$.

In fact, deeper results from Heegaard Floer knot theory apply    for $ T(2,  9) \cs -T(2,3)$.  A theorem of Hedden and Watson~\cite[Corollary 9]{hedden-watson} states the for an $L$--space knot   of genus $g$, the leading terms of the Alexander polynomial are $t^{2g} - t^{2g-1}$.  (This also follows from a recent result of Baldwin and Vela-Vick~\cite{baldwin-vela-vick} stating that if $K$ is fibered of genus $g$, then $\widehat{\rm{HFK}}(K, g-1) \ne 0$.)  However,  $ \phi_{18}(t)  = t^6 - t^3 + 1$, which is not consistent with this constraint.  Arguments along these lines yield large families of examples, but are not sufficient to give a general independence result.

Another interesting area for extending the main theorem is that of algebraic knots.  These can be described as iterated cables of torus knots of the form $T(p_1, q_1)_{(p_2,q_2), (p_3, q_3) , \ldots}$, for which all $p_i$ and $q_i$ are nonnegative and $q_{i+1} > p_iq_ip_{i+1}$ for all $i$.  (A basic reference for algebraic knots is~\cite{eisenbud-neumann}.  The algebraic properties of such knots are developed in~\cite{litherland}.)  The methods of this paper apply to show that many connected sums of algebraic knots are not concordant to $L$--space knots, but a general result has not been attained.  Thus, we end with a question.  \vskip.05in

\noindent{\bf Question:} Can a nontrivial connected sum of algebraic knots be concordant to an $L$--space knot?


\end{document}